\theoremstyle{plain}
\newtheorem{theorem}{Theorem}[section]
\newtheorem*{theorem*}{Theorem}
\newtheorem*{maintheorem-intro}{Theorem}
\newtheorem*{maintheorem-intro-2}{Theorem~\ref{Bridge number and genus}}
\newtheorem*{theorem-cablingconj}{Theorem~\ref{apps1} (1)}
\newtheorem*{theorem-toroidal}{Specialization of Theorem~\ref{apps1} (2)}
\newtheorem*{theorem-lens}{Theorem~\ref{Bounding distance - special}(1)}
\newtheorem*{theorem-SFS}{Theorem~\ref{Bounding distance - special}(2)}
\newtheorem*{theorem-cosmetic}{Theorem}
\newtheorem*{theorem-bridge}{Specialization of Corollary~\ref{Cor: exceptional bridge}}
\newtheorem*{theorem-Heeggenus}{Corollary~\ref{Cor: exceptional bridge} (2)}
\newtheorem{corollary}[theorem]{Corollary}
\newtheorem{lemma}[theorem]{Lemma}
\theoremstyle{definition}
\theoremstyle{definition}
\newcommand{\R}{{\mathbb R}}
\newcommand{\bi}{\begin{itemize}}
\newcommand{\ei}{\end{itemize}}
\newcommand{\be}{\begin{enumerate}}
\newcommand{\ee}{\end{enumerate}}
\newcommand{\defn}[1]{\emph{#1}}
\newcommand{\bdd}{\partial}
\newcommand{\boundary}{\partial}
\newcommand{\mc}[1]{\mathcal{#1}}
\begin{document}

   \title[Distortion and Bridge Distance]{Distortion and the Bridge Distance of Knots}
   \author{Ryan Blair}
   \email{ryan.blair@csulb.edu}
   \author{Marion Campisi}
   \email{marion.campisi@sjsu.edu}
   \author{Scott A. Taylor}
   \email{sataylor@colby.edu}
   \author{Maggy Tomova}
   \email{maggy-tomova@uiowa.edu}


\begin{abstract}
We extend techniques due to Pardon to show that there is a lower bound on the distortion of a knot in $\mathbb{R}^3$ proportional to the minimum of the bridge distance and the bridge number of the knot. We also exhibit an infinite family of knots for which the minimum of the bridge distance and the bridge number is unbounded and Pardon's lower bound is constant.
\end{abstract}
\maketitle
\date{\today}

\section{Introduction}

The \defn{distortion} of a rectifiable curve $\gamma$ in $\R^3$ was defined by Gromov \cite{Gr78} as:

\[
\delta(\gamma)=\sup\limits_{p,q\in \gamma} \frac{d_{\gamma}(p,q)}{d(p,q)}
\]
where $d_{\gamma}(p,q)$ denotes the shorter of the two arclength distances from $p$ to $q$ along $\gamma$ and $d(p,q)$ denotes the Euclidean distance from $p$ to $q$ in $\mathbb{R}^3$. We can turn this into a knot invariant, by defining for each knot type $K$ in $\mathbb{R}^3$,
\[
\delta(K)=\inf\limits_{\gamma \in K} \delta(\gamma)
\]
where the infimum is taken over all rectifiable curves $\gamma$ representing the knot type $K$.

One of the earliest results giving a lower bound on distortion was due to Gromov who showed that $\delta(\gamma)\geq \frac{\pi}{2}$ with equality if and only if $\gamma$ is the standard round circle \cite{Gr83}. More recently, Denne and Sullivan showed that $\delta(K)\geq \frac{5\pi}{3}$ whenever $K$ is not the unknot \cite{DS}. However, Gromov provided an infinite sequence of knot types with uniformly bounded distortion formed by taking the connected sum of an increasing number of trefoils. It follows that no increasing, unbounded function of crossing number, genus or bridge number can provide a lower bound on the distortion of a knot. 

In view of these results, Gromov  \cite{Gr83} asked if there is a universal upper bound on $\delta(K)$ for all knot types $K$. Pardon \cite{Pardon} answered this question negatively when he showed that the distortion of a knot type is bounded below by a quanitity proportional to a certain topological invariant, called representativity, which is known to not have a universal upper bound. Subsequently, Gromov and Guth \cite{GG}  also answered Gromov's question in the negative by giving a lower bound on $\delta(K)$ in terms of the hyperbolic volume of certain covers of $S^3$ ramified over $K$.

In this paper, we use some of the ideas from Pardon's proof to show that distortion is bounded below by a quantity involving two topological knot invariants: bridge number and bridge distance.

\begin{theorem} \label{thm:main}
Let $K$ be a knot type in $\mathbb{R}^3$. Then
\[
\delta(K)\geq \frac{1}{160}\min(d(K),2b(K))
\]
where $d$ is bridge distance and $b$ is bridge number.
\end{theorem}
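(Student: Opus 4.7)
The plan is to adapt Pardon's concentric-sphere technique to produce, from any near-optimal representative $\gamma$ of $K$, a bridge sphere $\Sigma$ for $K$ whose bridge number or curve-complex distance is controlled linearly by $\delta(\gamma)$. Fix $\gamma \in K$ with distortion $\delta$ close to $\delta(K)$, pick a basepoint $p \in \gamma$, and consider the round spheres $S_r = \partial B(p,r)$ for $r > 0$. The crucial distortion estimate is that any subarc $\alpha \subset \gamma$ with both endpoints in $B(p,r)$ has Euclidean chord at most $2r$, and hence (for the shorter of the two arcs joining its endpoints along $\gamma$) arclength at most $2r\delta$. Consequently, when $r$ is small compared to $\mathrm{length}(\gamma)/\delta$, every arc of $\gamma \cap B(p,r)$ is ``metrically short'' at the scale $r$, and in particular is confined to a thin tubular region near the chord joining its endpoints.

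I would then pick a geometric sequence of radii $r_0 < r_1 < \cdots < r_N$ with ratio on the order of $2\delta$, nested between a very small inner scale and the diameter of $\gamma$. The short-arc property lets one cap off each ``returning'' arc of $\gamma$ in each annular shell $B(p,r_{i+1}) \setminus B(p,r_i)$, so that $|S_r \cap \gamma|$ drops monotonically through the sequence. After this cleanup, one of two alternatives must hold. In the first, some $S_{r_i}$ already satisfies $|S_{r_i} \cap \gamma| \leq C\delta$, and since $S_{r_i}$ is by construction isotopic to a bridge sphere for $K$ we obtain $b(K) \leq \tfrac{1}{2}|S_{r_i} \cap \gamma| \leq C\delta$, i.e., $2b(K) \leq 160\delta$. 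In the second, the $S_{r_i}$ of maximal intersection promotes to a genuine bridge surface $\Sigma$, and the capping disks from consecutive shells produce curves in the curve complex $\mathcal{C}(\Sigma \setminus K)$ that are pairwise close; concatenating these across the $N$ shells yields a curve-complex path of bounded length from a compressing-type disk on one side of $\Sigma$ to one on the other side, bounding $d(K) \leq d(\Sigma) \leq 160\delta$. The constant $1/160$ will emerge from optimizing the ratio of consecutive radii and tracking the prefactors in both cases.

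The main obstacle I anticipate is the step that converts metric shortness of an arc into genuine topological triviality: a short arc with endpoints on a sphere in $\R^3$ need not be isotopic rel endpoints to a boundary arc if it is knotted inside the ball, so the cap-off operation is not automatic. Under the distortion hypothesis such arcs should be unknotted and straightenable, but proving this cleanly will require a careful topological lemma, presumably via a straight-line homotopy supported in a neighborhood of the chord plus a nearby PL approximation argument. A second delicate point is arranging that the capping disks in consecutive shells have boundary curves at curve-complex distance exactly one, so that the global path in $\mathcal{C}(\Sigma \setminus K)$ has length controlled by the number of shells rather than by something larger. Getting both of these done with the final constants aligning to $1/160$ — rather than to something involving $\log\delta$ — is where Pardon's technique needs the most genuine strengthening.
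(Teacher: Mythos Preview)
Your proposal has a fundamental logical gap that makes the approach unworkable as stated. You aim to construct, from the concentric spheres $S_{r_i}$, a bridge sphere $\Sigma$ and then bound $d(\Sigma)$ from above; you then write ``$d(K)\le d(\Sigma)\le 160\delta$.'' But $d(K)$ is defined as the \emph{maximum} of $d(\Sigma')$ over all minimal bridge spheres $\Sigma'$, so even if your $\Sigma$ were minimal the correct inequality is $d(\Sigma)\le d(K)$, not the reverse. Bounding the distance of one particular bridge sphere you happen to construct says nothing about $d(K)$; to prove the theorem one must show that a minimal bridge sphere of \emph{maximal} distance forces the distortion to be large. Your dichotomy therefore collapses: in your ``second alternative'' you have produced no information about $\min(d(K),2b(K))$ at all. (A secondary issue is that nothing in your construction forces the round sphere $S_{r_i}$ to be a bridge sphere for $K$ in the first place; the cap-off operation you describe changes the knot type.)

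The paper's proof runs in the opposite direction and uses different geometry. It fixes from the outset a minimal bridge sphere $\Sigma$ realizing $d(K)$, takes a spine $\Gamma$ for one of the complementary trivial tangles, and adapts Pardon's box-shrinking argument to $\Gamma$ (not to a family of concentric spheres). Assuming $\delta(\beta^*)<\tfrac{1}{160}\min(d(\Sigma),|\Sigma\cap\beta^*|)$, one shows that whenever $\Gamma$ sits inside $\mathrm{Box}(r)$ it can be isotoped into a box of strictly smaller size $(1-\Delta)r$, contradicting nontriviality of $K$. The shrinking step cuts $\mathrm{Box}(r)$ by a $\theta$-graph $S\cup D$ with controlled intersection with $\beta^*$, and the topological heart---pushing $\Gamma$ off a regular neighborhood $N$ of this $\theta$-graph and into one of the two half-boxes---is carried by the Johnson--Tomova distance estimates for bridge surfaces in tangles (Theorems~\ref{tangdist1} and~\ref{tangdist2}). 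Those theorems are precisely the bridge between the curve-complex distance of $\Sigma$ and the intersection numbers $|S\cap\beta^*|$, $|D\cap\beta^*|$ that the distortion hypothesis controls. Your sketch has no analogue of either the spine or these tangle-distance results, and the ``capping disks in consecutive shells give a short path in $\mathcal{C}(\Sigma)$'' idea, even granting all the unproved lemmas you flag, would still land on the wrong side of the inequality defining $d(K)$.
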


Furthermore, we show that our bound is arbitrarily stronger than Pardon's by providing a family of knots, based on a family of knots constructed by Johnson and Moriah in \cite{JM}, for which representativity is constant while $\min(d(K),2b(K))$ is unbounded. In addition, we give an upper bound for the distortion of this family of knots which is also in terms of bridge number and bridge distance.

This paper is structured as follows: Section \ref{sec:bridge} contains relevant definitions, including those of bridge number and bridge distance. Theorem \ref{thm:main} is proved in Section \ref{sec:proof}. In Section \ref{sec:altJMknots} we explain Johnson and Moriah's construction and its relevance to our lower bounds. In Section \ref{thm:upperbound} we give an \emph{upper bound} for the distortion of the Johnson-Moriah knots proportional to the product of bridge distance with the square of the bridge number.

\section{Preliminaries}\label{sec:bridge}

\subsection{Knots in $\R^3$ and $S^3$:} As is usual in knot theory, we will freely switch between considering knots in $\R^3$ and $S^3$. Two knots in $\R^3$  (or $S^3$) are equivalent if there is an orientation-preserving self-homeomorphism of $\R^3$ (or $S^3$, respectively) taking one knot to the other. Since distortion is a metric quantity, we will make this precise by choosing a point at infinity in $S^3$ and considering stereographic projection from $S^3$ to $\R^3$. As is well-known, two knots in $S^3$ are equivalent if and only if their images in $\R^3$ are equivalent.

\subsection{Tangles:}

Suppose $N$ is a 3-manifold containing a properly embedded, possibly disconnected, 1-manifold $\tau$. If a properly embedded surface $F \subset N$ is transverse to $\tau$, we will write $F \subset (N,\tau)$ and consider the points $\tau \cap F$ to be \emph{punctures} on $F$. Two properly embedded punctured surfaces in $(N,\tau)$ are \emph{equivalent} if they are transversely properly isotopic with respect to $\tau$. A curve $\lambda \subset F$ will be called \defn{essential} if $\lambda$ is disjoint from $\tau$, is not boundary parallel, and doesn't bound a disk in $F$ with fewer than two punctures. The surface $F$ will be said to be \defn{incompressible} in $(N,\tau)$ if there is no disk $D$ (called a \defn{compressing disk}) disjoint from $\tau$ with $D \cap F=\bdd D$ an essential curve in $F$. If $F$ is a sphere, it is called an \defn{inessential sphere} if it bounds a ball disjoint from $\tau$ or bounds a ball containing a single, boundary parallel subarc of $\tau$. A connected incompressible surface $F$ will be called \defn{essential} if it is not an inessential sphere and if there is no parallelism relative to $\tau$ between $F$ and some collection of components of $\bdd N$. A disconnected surface is essential if every component of the surface is essential. If $F$ is a closed, connected, punctured separating surface in $N$ then $F$ is \emph{bicompressible} if there exists a compressing disk for $F$ contained to each side.

 A \defn{punctured 3-sphere} is the result of removing finitely many open balls from $S^3$. A \emph{tangle} $(B, \tau)$ consists of a punctured 3-sphere $B$ and a properly embedded 1--manifold $\tau$ that has no closed components. A tangle $(B, \tau)$ is \emph{irreducible} if every unpunctured 2-sphere in $(B, \tau)$ bounds a ball disjoint from $\tau$. The tangle $(B, \tau)$ is \emph{prime} if every twice punctured sphere is either inessential or is parallel to a component of $\bdd B$ relative to $\tau$. The tangle $(B, \tau)$ is \emph{trivial} if there is a component $P \subset \bdd B$ so that no arc of $\tau$ has both of its endpoints on the same component of $\bdd B \setminus P$ and there is a collection of disjoint compressing disks for $P$ contained in the complement of $\tau$ so that reducing $B$ along this collection results in components that are either balls containing a single boundary parallel arc or are homeomorphic to $S^2 \times I$ possibly containing vertical arcs. The component $P$ is called the positive boundary to $B$, we write $P=\bdd_+B$.

Suppose $B$ is a ball and $(B, \tau)$ is a trivial tangle. A \defn{spine} for $(B, \tau)$ is a graph $\Gamma$ with a single vertex $v$ of degree $n$ and edges that connect $v$ to each of the components of $\tau$ so that the boundary of a neighborhood of $\Gamma$ is isotopic to $\partial
 B$ relative to $\tau$. Spines can be defined for tangles in other manifolds as well but this more general definition is not needed for the current paper.
 
\subsection{Bridge spheres:} An $n$-bridge sphere $\Sigma$ for a PL knot $\gamma$ in $S^3$ is a sphere transverse to $\gamma$, dividing $(S^3,\gamma)$ into two trivial $n$-component tangles. The \defn{bridge number} of a knot type $K$ in $S^3$ is the minimum $n$ such that there is an $n$-bridge sphere for a PL representative of $K$. The bridge number of $K$ is denoted $b(K)$. 

 A \defn{bridge sphere} $\Sigma$ for a tangle $(B, \tau)$ is a 2--sphere in $B$ transverse to $\tau$ such that $\Sigma$ divides $(B,\tau)$ into two trivial tangles, each having $\Sigma$ as its positive boundary. 
 

\subsection{Distance:} We will utilize ``sufficiently complicated" bridge spheres, where complexity is measured via distances between disk sets in the curve complex.  Let $\gamma$ be a PL knot in $S^3$ and let $\Sigma$ be any bridge sphere for $\gamma$ separating $S^3$ into balls $B_1$ and $B_2$. Define the (1-skeleton of the) \defn{curve complex} $\mathcal{C}(\Sigma)$ to be a graph whose vertices are isotopy classes of essential simple closed curves. In the context of the curve complex, we will always discuss curves up to isotopy even if we don't explicitly state so. Two vertices are connected by an edge if their corresponding curves may be realized disjointly. The vertex set of the curve complex $\mathcal{C}(\Sigma)$ has a natural metric constructed by assigning each edge length one and defining the distance between two vertices to be the length of the shortest path between them.
Let the \emph{disk set} $\mathcal{D}_i \subset \mathcal{C}(\Sigma)$ be the set of vertices which correspond to essential curves in $\Sigma$ that bound compressing disks in $B_i$, and the \emph{distance of $\Sigma$}, denoted $d(\Sigma)$, to be

\[ d(\Sigma) = \min\{ d(c_1,c_2): c_i \in \mathcal{D}_i\}.\] If $\Sigma$ is punctured four or fewer times (i.e., when the curve complex is empty or disconnected), we define the distance of $\Sigma$ to be infinite. The \defn{distance of $K$}, $d(K)$, is the maximum possible distance $d(\Sigma)$ of any bridge sphere $\Sigma$ for a PL representative of $K$ that also realizes the bridge number of $K$. It follows from \cite{To07}, that as long as $b(K) \geq 3$, this maximum is a well-defined positive integer.

The utility of this definition is made clear by the following two theorems of Johnson and Tomova. The first is a version of Theorem 4.4 from \cite{johntom} and the second is implicit in the proof of Theorem 4.2 from \cite{johntom}. We have restated the theorems to be in the language of punctured surfaces. A \defn{c-disk} for a surface $S \subset (M, \tau)$ is a disk with boundary an essential curve in the punctured surface $S$; transverse to $\tau$; with interior disjoint from $S$ and intersecting $\tau$ in at most 1 point. 

For the statements below, suppose that $(N, \kappa)$ is a tangle and that $M \subset N$ is an embedded punctured 3-sphere with $\boundary M$ transverse to $\kappa$. Let $T = M \cap \kappa$. Assume that every compressing disk for $\boundary M \subset (N, T)$ lies interior to $M$. Let $\Sigma$ be a bridge sphere for $(M, \kappa)$.

\begin{theorem}\label{tangdist1}(Johnson and Tomova)
Let $\Sigma'$ be a bridge sphere for $(N,\kappa)$.  Then one of the following holds:
{\bi
\item After isotopy and surgery of $\Sigma' \cap M$ along c-disks in $M$, we obtain a compressed surface $\Sigma''$ such that $\Sigma'' \cap M$ is parallel to $\Sigma$. In particular, $|\Sigma\cap \kappa| \leq |\Sigma '\cap \kappa|$.
\item $d(\Sigma) \leq |\Sigma'\cap \kappa|$.
\item $|\Sigma \cap \kappa|\leq 5$.
\ei}
\end{theorem}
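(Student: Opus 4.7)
My plan is to prove Theorem \ref{tangdist1} by a Rubinstein--Scharlemann style double sweep-out argument, which is exactly the technique employed by Johnson and Tomova. I would start by fixing a bridge splitting of $(M,\kappa)$ across $\Sigma$, with spines $\Gamma_1, \Gamma_2$ of the two trivial tangles, and an associated sweep-out $\{\Sigma_t\}_{t \in [-1,1]}$ with $\Sigma_0 = \Sigma$, $\Sigma_{-1} = \Gamma_1$, and $\Sigma_1 = \Gamma_2$. Analogously, I would use the bridge splitting of $(N,\kappa)$ across $\Sigma'$ to obtain a sweep-out $\{\Sigma'_s\}_{s \in [-1,1]}$ of $N$. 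The hypothesis that every compressing disk for $\boundary M$ in $(N,\kappa)$ lies in $M$ is crucial here: it lets me replace analysis of $\Sigma'_s$ by analysis of $\Sigma'_s \cap M$ after discarding components of $\Sigma'_s \setminus M$ that can be isotoped and boundary-compressed off. After a generic perturbation, the singular set of the two sweep-outs determines a Rubinstein--Scharlemann graphic in the square $[-1,1]^2$.

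Next, I would label each complementary region of the graphic: the region containing $(t,s)$ receives label $1$ (respectively $2$) if there is a c-disk for $\Sigma'_s \cap M$ in $(M,T)$ lying on the $\Gamma_1$-side (respectively $\Gamma_2$-side) of $\Sigma_t$. If some region carries both labels, then I have c-disks on both sides of $\Sigma_t$ whose boundaries lie on $\Sigma'_s \cap M$; pushing these boundaries through the product structure of the trivial tangle bounded by $\Sigma_t$ and reading them off on $\Sigma$ yields vertices in the disk sets $\mathcal{D}_1$ and $\mathcal{D}_2$ joined in $\mathcal{C}(\Sigma)$ by a path of length at most the number of intersection curves in $\Sigma_t \cap \Sigma'_s$. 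Since this count is bounded above by $|\Sigma' \cap \kappa|$, this yields conclusion (ii), namely $d(\Sigma) \le |\Sigma' \cap \kappa|$.

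If no region is doubly labeled, I would chase labels along the boundary of $[-1,1]^2$: near $t = \pm 1$ (respectively $s = \pm 1$), $\Sigma_t$ (respectively $\Sigma'_s$) collapses onto a spine, so the labels on adjacent regions are forced. An Euler-characteristic and parity comparison of these forced labels then either produces an unlabeled region or forces the small-complexity exception (iii), $|\Sigma \cap \kappa| \le 5$. In an unlabeled region, $\Sigma'_s \cap M$ admits no c-disks on either side of $\Sigma_t$; maximally c-disk surgering $\Sigma' \cap M$ then produces a compressed surface $\Sigma''$ with $\Sigma'' \cap M$ isotopic to $\Sigma$, giving conclusion (i), and in particular $|\Sigma \cap \kappa| \le |\Sigma' \cap \kappa|$.

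The main obstacle I expect is the bookkeeping for cut-disks alongside ordinary compressing disks: a compressing disk and a cut-disk on the same side of $\Sigma$ only give vertices of $\mathcal{C}(\Sigma)$ at distance at most $2$, so working with c-disks introduces a small slack in the distance estimate. The low-puncture exception $|\Sigma \cap \kappa| \le 5$ arises precisely from cases where this slack, combined with the possible emptiness or disconnectedness of $\mathcal{C}(\Sigma)$ for few-punctured spheres, prevents the labeling argument from cleanly concluding (i) or (ii). Controlling this case analysis carefully, and verifying that the parallelism asserted in (i) really does survive the sequence of c-disk surgeries, is what makes the argument technical rather than routine.
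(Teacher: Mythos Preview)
The paper does not supply a proof of this theorem: it is quoted as a restatement of Theorem~4.4 of Johnson--Tomova \cite{johntom}, and the authors simply cite that reference. So there is no ``paper's own proof'' to compare against; your proposal is an attempt to reconstruct the argument from the original source.

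That said, your outline is broadly the correct strategy and matches the architecture of Johnson--Tomova's argument: a two-parameter sweep-out, a Rubinstein--Scharlemann graphic, a labeling of regions by which side of $\Sigma_t$ contains a c-disk for $\Sigma'_s$, and the trichotomy (doubly labeled region $\Rightarrow$ distance bound; unlabeled region $\Rightarrow$ parallelism after c-surgery; otherwise low complexity). A couple of points deserve tightening. First, the step ``this count is bounded above by $|\Sigma'\cap\kappa|$'' is not automatic: in Johnson--Tomova the path in $\mathcal{C}(\Sigma)$ is obtained not by counting intersection curves at a single $(t,s)$ but by tracking how the labeling changes as one crosses edges of the graphic along a horizontal or vertical segment, and the bound on the resulting path length is tied to the Euler characteristic (equivalently puncture count) of $\Sigma'$. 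Second, the passage from an unlabeled region to conclusion~(i) is more delicate than ``maximally c-disk surgering $\Sigma'\cap M$ then produces a compressed surface\ldots isotopic to $\Sigma$'': absence of c-disks for $\Sigma'_s\cap M$ on either side of $\Sigma_t$ does not by itself give incompressibility in $(M,T)$, and one needs the spanning/essential-intersection arguments from \cite{johntom} to conclude that some component of the surgered surface is actually parallel to $\Sigma$. These are exactly the ``technical rather than routine'' points you flag at the end, so your self-assessment is accurate; just be aware that filling them in is the bulk of the work.
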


\begin{theorem}\label{tangdist2}(Johnson and Tomova)
Let $F \subset (N, \kappa)$ be a separating planar punctured surface transverse to $\boundary M$.  Additionally, suppose $\boundary (F \cap M)$ is essential in the punctured surface $\boundary M$. Then one of the following holds:
{\bi
\item $d(\Sigma) \leq |F\cap \kappa|+|\bdd F|$.
\item $ |\Sigma \cap \kappa|\leq 5$.
\item Each component of $F \cap (M,T)$ is boundary parallel in $(M,T)$
\ei}
\end{theorem}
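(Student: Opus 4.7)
The plan is to adapt the Johnson--Tomova sweepout/graphic argument to the setting of a planar separating surface. Assume the second and third alternatives both fail: $|\Sigma\cap\kappa|\ge 6$, and at least one component of $F':=F\cap M$ fails to be boundary parallel in $(M,T)$. The goal is to produce a path in $\mathcal{C}(\Sigma)$ from $\mathcal{D}_-$ to $\mathcal{D}_+$ of length at most $|F\cap\kappa|+|\partial F|$, which immediately bounds $d(\Sigma)$.

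First I would choose spines $\Gamma_\pm$ for the two trivial tangles into which $\Sigma$ splits $(M,\kappa\cap M)$ and build a smooth sweepout $\{\Sigma_t\}_{t\in[-1,1]}$ with $\Sigma_0=\Sigma$ and $\Sigma_t$ collapsing to $\Gamma_\pm$ as $t\to\pm 1$. Then isotope $F'$ into general position with respect to this sweepout so that $F'\cap \Sigma_t$ is transverse except at finitely many critical values of $t$, at which the intersection pattern changes by a birth, death, or saddle. At the extremes $t$ near $\pm 1$, each non-boundary-parallel component of $F'$ meets $\Sigma_t$ in curves bounding compressing disks on the opposite side of $\Sigma_t$, because all of that side's material has been pushed onto the collapsing spine. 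The hypothesis that $\partial F'$ is essential in $\partial M\setminus \kappa$ prevents these intersection curves from being peripheral or null-homotopic, producing representative curves $c_\pm \in \mathcal{D}_\pm$.

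Next I would track the system of essential curves in $\Sigma_t \cap F'$ as $t$ moves from $-1$ to $+1$. Between critical times these curves evolve by ambient isotopy; at each saddle, an outermost-arc or outermost-disk surgery in $F'$ yields a c-disk for $\Sigma$ whose boundary lies within distance one of the previous curve in $\mathcal{C}(\Sigma)$. The assumption $|\Sigma\cap\kappa|\ge 6$ ensures that surgered curves remain essential (rather than peripheral, trivial, or bounding a once-punctured disk) and hence remain vertices of $\mathcal{C}(\Sigma)$. A standard Euler-characteristic count on the planar surface $F'$---boundary circles contributing to $|\partial F|$ and $\kappa$-punctures to $|F\cap\kappa|$---bounds the total number of saddles by $|F\cap\kappa|+|\partial F|$, completing the path from $c_-$ to $c_+$ of the required length.

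The main obstacle I expect lies precisely in the saddle analysis above: one must rule out configurations in which outermost surgery yields only trivial, peripheral, or puncture-bounding curves, since in those cases no edge of $\mathcal{C}(\Sigma)$ is produced and one gets no adjacency. This is exactly where the alternative $|\Sigma\cap\kappa|\le 5$ is forced---when $\Sigma$ has four or fewer punctures the curve complex is empty or disconnected, and the marginal case of five punctures leaves too little room for outermost surgery to guarantee a usable adjacency. Carrying this out cleanly requires the careful c-disk case analysis at the heart of \cite{johntom}.
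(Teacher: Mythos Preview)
The paper does not supply its own proof of Theorem~\ref{tangdist2}; it is stated as being ``implicit in the proof of Theorem~4.2 from \cite{johntom}'' and then used as a black box. So there is no argument in this paper to compare your sketch against. Your outline---sweepout by $\{\Sigma_t\}$, graphic with respect to $F'$, tracking essential intersection curves, and extracting a path in $\mathcal{C}(\Sigma)$ from $\mathcal{D}_-$ to $\mathcal{D}_+$---is precisely the Johnson--Tomova method that the paper is citing, so in that sense you are reproducing the intended proof.

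One small imprecision worth flagging: the Euler-characteristic count does not literally bound the ``total number of saddles'' of the graphic (one can always introduce cancelling birth/saddle pairs). What it bounds is the length of the resulting path in $\mathcal{C}(\Sigma)$, because only those saddles that actually change the isotopy class of an essential curve contribute an edge, and an index argument ties that count to $2-\chi(F\setminus\kappa)=|\partial F|+|F\cap\kappa|$. You seem aware of this in your final paragraph, but the phrasing earlier overstates what Euler characteristic controls.
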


\subsection{PL assumption:} Given a rectifiable simple closed curve $\beta^* \subset \R^3$, there is an equivalent PL simple closed curve $\beta \subset \R^3$. The preimage of $\beta$ under stereographic projection is equivalent to a PL knot $\beta'$ in $S^3$. Suppose that $\Sigma$ is a bridge sphere for $\beta'$, disjoint from the point at infinity. The composition of self-homeomorphisms of $S^3$ and $\R^3$ with the stereographic projection takes $\Sigma$ to a sphere in $\R^3$ which intersects $\beta^*$. This sphere may not be PL. By judiciously choosing the self-homeomorphisms and approximating stereographic projection with a PL map, it is possible (see \cite{Pardon}) to ensure that the non-PL points occur only near $\beta^*$. We will continue to refer to this sphere as a bridge sphere for $\beta^*$ and omit the details required for handling the case when $\beta^*$ is not PL. These details are the same as in Pardon's paper. Henceforth, for simplicity, we will assume that all knots, tangles, and surfaces are PL and will freely pass back and forth between $S^3$ and $\R^3$. In particular, $\beta^* = \beta$ except where indicated. Our main theorem applies to non-PL, but rectifiable, knots by applying the same methods as Pardon. Where possible, we will adopt Pardon's notation (e.g. using $\beta^*$) to make the comparison with his paper easier.

\section{Proof of Theorem \ref{thm:main}}\label{sec:proof}

The initial steps of our proof are modelled on Pardon's. Suppose that $K$ is a knot type in $\R^3$ and that $\beta, \beta^*$ are representatives of $K$.  Suppose that $\beta$ is PL and has a bridge sphere $\Sigma$ and that $\beta^* \subset \R^3$ is rectifiable. As we mentioned previously, for ease of exposition, we will assume that $\beta^*$ is actually PL, in which case we can consider $\Sigma$ as a PL bridge sphere for $\beta^*$. The general case can be handled as in Pardon's paper.

We will show:
\[
\delta(\beta^*) \geq \frac{1}{160}\min(d(\Sigma), |\Sigma \cap \beta|).
\]
When applied to a minimal bridge sphere $\Sigma$ of maximum possible distance, Theorem \ref{thm:main} follows.

To obtain a contradiction, let $k = \min(d(\Sigma), |\Sigma \cap \beta^*|)$ and assume that $\delta(\beta^*) < k/160$. By Gromov's result, $\delta(K)\geq \frac{\pi}{2}$.  Hence, $k>80\pi$, and in particular the curve complex for $\Sigma$ is connected. By \cite{BS},  the distance of a bridge sphere for a composite knot is at most two and therefore $(S^3, \beta^*)$ is prime.

As in Pardon's paper, let $Box(r)$ denote the equivalence class of
\[
\big\{(x,y,z) \in \R^3 : |x| \leq r, |y| \leq 2^{1/3}r, |z| \leq 2^{2/3} r\big\}
\]
up to Euclidean isometry. Departing from \cite{Pardon}, define  $\mc{R}\subset\mathbb{R}$ by declaring $r \in \mc{R}$ if and only if there exists a representative of $Box(r)$ containing a spine for one of the complementary tangles to $\Sigma$.  Modelling our work on Pardon's, we will show that there exists a positive real number $\Delta$ such that $(1-\Delta)\mc{R}\subset \mc{R}$. This leads to a contradiction as a spine for a bridge sphere of a non-trivial tangle cannot be isotoped relative to the knot to lie in an arbitrarily small region of $\mathbb{R}^3$

Since distortion is invariant under rigid motions and scaling, we can assume that $1\in \mc{R}$ and that the corresponding representative of $Box(1)$ is standardly positioned and centered at the origin. 

As in \cite{Pardon}, for $\epsilon \leq 1/7$, there exists $r_1\in (1,1+\epsilon)$ such that
\[
|\beta^*\cap \partial Box(r_1)|\leq 10(1+\frac{1}{\epsilon})\delta(\beta^*).
\]

Additionally,  there exists $s_1\in(-\epsilon, \epsilon)$ such that

$$|\beta^*\cap (Box(r_1)\cap \{z=s_1\})|\leq 5(1+\frac{1}{\epsilon})\delta(\beta^*).$$

As $\epsilon \leq 1/7$, from the assumption, it follows that $20(1+\epsilon^{-1})\delta(\beta^*)<k$.

Hence,
\[\begin{array}{rcl}
|\beta^*\cap \partial Box(r_1)|&\leq& k/2, \text{ and } \\
|\beta^*\cap (Box(r_1)\cap \{z=s_1\})|&\leq& k/4.
\end{array}
\]

Let $S=\partial Box(r_1)$, $D=(Box(r_1)\cap \{z=s_1\})$ and $\Theta=S\cup D$. Note that
\[
|S\cap \beta^*|+2|D\cap \beta^*|\leq 20(1+\frac{1}{\epsilon})\delta(\beta^*)<k.\]

By applying Pardon's techniques, we may assume that $\beta^*$ is transverse to $\Theta$ and disjoint from $S\cap D$. There exists $\zeta$ sufficiently small so that a closed regular $\zeta$-neighborhood of $\Theta$, denoted by $N$, meets $\beta^*$ in $|S\cap \beta^*|+|D\cap \beta^*|$ arcs. The complement of $N$ in $\mathbb{R}^3$ consists of three regions. We will denote the closures of the two bounded regions by $H_1^*$ and $H_2^*$ while the closure of the third region will be denoted by $B^*$, see Figure \ref{fig:Theta}.

\begin{figure}[ht]
\labellist \small\hair 2pt
\pinlabel {$N$} [b] at 345 20
\pinlabel {$H_1^*$} [b] at 210 110
\pinlabel {$H_2^*$} [b] at 210 285
\pinlabel {$B^*$} [b] at 30 15
\pinlabel {$\Theta$} [b] at 340 205
\endlabellist
\includegraphics[scale=.5]{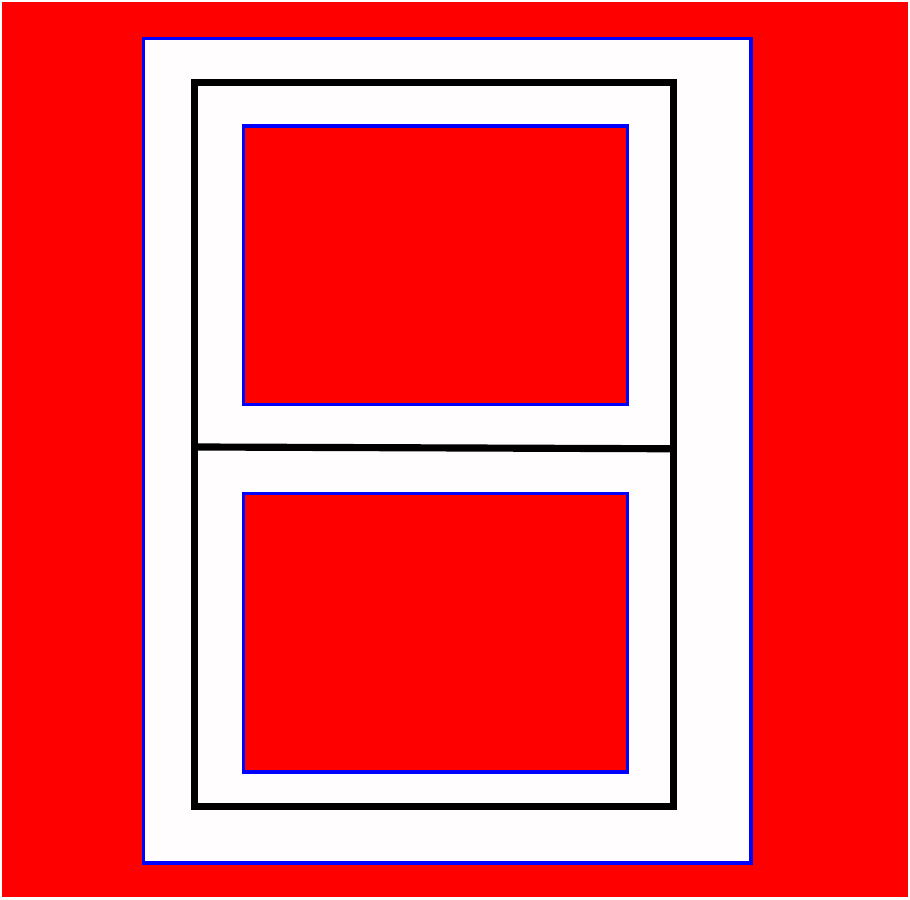}
\caption{A schematic picture of $N$, $H_1^*$, $H_2^*$, $B^*$ and $\Theta$.}
\label{fig:Theta}
\end{figure}

Since $1\in \mc{R}$, then $r_1\in \mc{R}$ and there is a spine $\Gamma$ for one of the tangles complementary to $\Sigma$ contained in the interior of the 3-ball bounded by $\partial B^*$. We may assume that $\Gamma$ has been isotoped to be transverse to $\partial H_1^*$ and $\partial H_2^*$.

\begin{lemma}\label{first isotopy}
$\Gamma$ can be isotoped so that it is contained in one of $N$, $H_1^*$, or $H_2^*$.
\end{lemma}

\begin{proof}

We will construct an isotopy of $\boundary N$ relative to $\beta^*$ so that after the isotopy $\Gamma$ is disjoint from $\partial N$ and continues to be disjoint from $B^*$. Changing our perspective so that it is $\Gamma$ which is subject to the isotopy, gives us the desired result.

Let $\Sigma_\Gamma$ be the boundary of a small regular neighborhood of $\Gamma$. Then $\Sigma_\Gamma$ is isotopic to $\Sigma$ relative to the knot and therefore separates $(S^3, \beta^*)$ into two trivial tangles $(B_1,T_1)$ and $(B_2,T_2)$. The sphere $\partial B^*$ is contained in one of these balls, say $(B_1,T_1)$. 

Let $D_1$ be a compressing disk for $\partial B^*$ in $(B_1,T_1)$. This disk may intersect $\boundary H_1^* \cup \boundary H_2^*$ in simple closed curves. Starting with an innermost such curve on $D_1$ bounding a disk $\tilde{D} \subset D_1$ compress $\boundary H_1^* \cup \boundary H_2^*$ along $\tilde{D} $. Repeat this process with a new innermost curve of intersection until $D_1$ is disjoint from the partially compressed $\boundary H_1^* \cup \boundary H_2^*$. Finally, compress $\partial B^*$ using $D_1$. If the compressed $\partial B^*$ has other compressing disks in $(B_1, T_1)$ repeat the series of compressions with another compressing disk $D_2$. This process terminates with surfaces $\partial B^c \cup \partial H^{pc}_1 \cup \partial H^{pc}_2$ where $\partial B^c$ is the result of compressing $\partial B^*$ completely along disks $D_1, D_2, ...,D_k$ and $\partial H^{pc} \cup \partial H^{pc}$ is the result of partially compressing $\boundary H_1^* \cup \boundary H_2^*$ along sub disks of these disks.

As $\partial B^c$ is an incompressible surface in a trivial tangle, it is the union of inessential twice punctured spheres. Let $\mc{U}\subset B_1$ be the union of the 3-balls containing unknotted arcs which are bounded by $\partial B^c$. There is an isotopy of $\partial B^c \cup \partial H^{pc}_1 \cup \partial H^{pc}_2$ in $B_1$ after which $\partial B^c$ and any component of $\partial H^{pc}_1 \cup \partial H^{pc}_2$ contained in $\mc{U}$ is contained in an arbitrarily small neighborhood of $\beta^*$, denoted $\eta(\beta^*)$. Since the image of this isotopy is contained in $B_1$, $\partial B^c$ remains disjoint from $\Gamma$ during the course of this isotopy. 

 Note that we can recreate an isotopic copy $F^{n}$ of $\boundary N$ by tubing together the components of $\partial B^c \cup \partial H^{pc}_1 \cup \partial H^{pc}_2$ along a series of (possibly nested) annuli contained in the boundary of a regular neighborhood of the core of each surgery disk. Moreover, the isotopy taking $\boundary N$ to $F^{n}$ can be chosen so that its image is contained in $B_1$. Hence, the isotopy from $\boundary N$ to $F^{n}$ restricts to an isotopy of $\partial B^*$ that is disjoint from $\Gamma$. Moreover $\mc{U}$ is also disjoint from $\Gamma$. See Figure \ref{fig:Iso1}.

\begin{figure}[ht]
\labellist \small\hair 2pt
\pinlabel {$\beta^*$} [b] at 525 12
\pinlabel {$\eta(\beta^*)$} [b] at 530 180
\pinlabel {$\Gamma$} [b] at 238 460
\pinlabel {$F^n$} [b] at 234 260
\pinlabel {$\partial N$} [b] at 253 355
\endlabellist
\includegraphics[scale=.6]{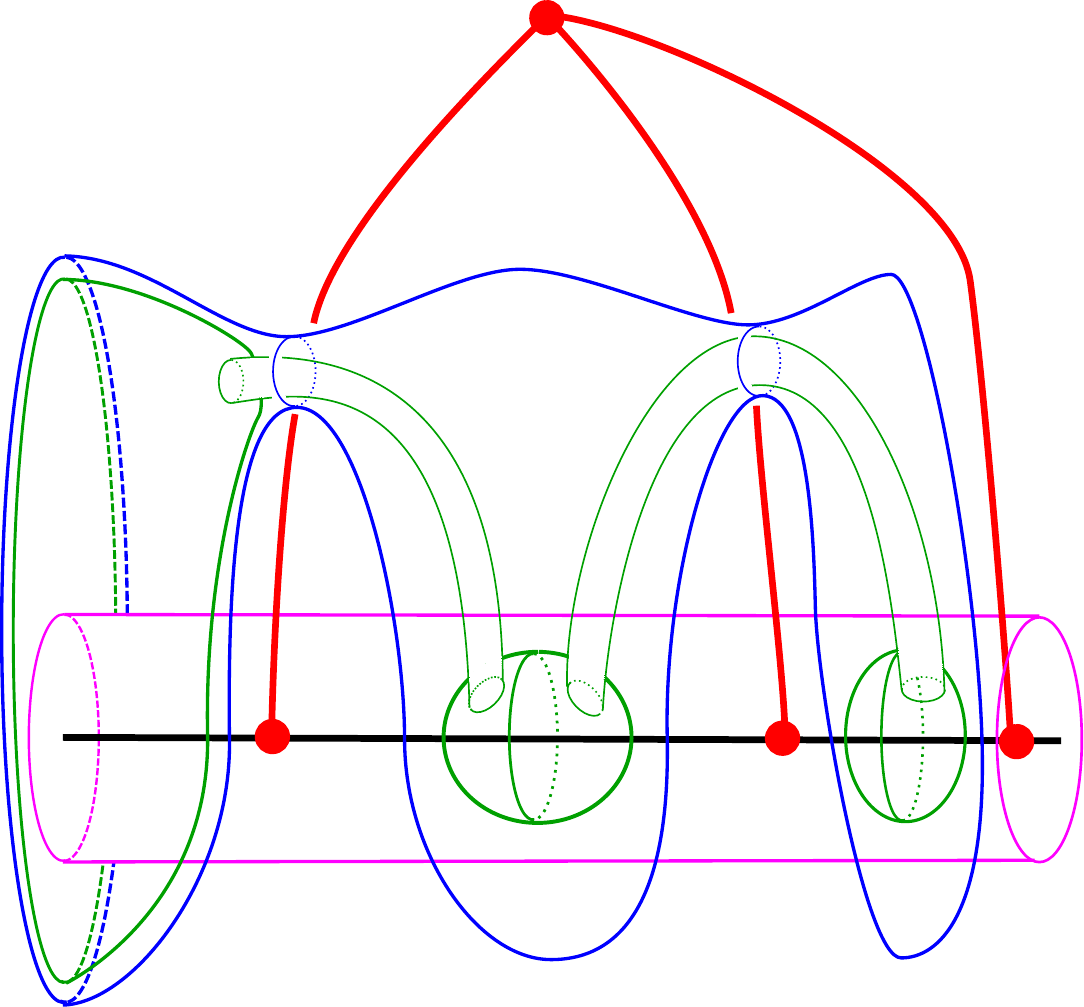}
\caption{A possible configuration of $F^n$ and $\boundary N$.}
\label{fig:Iso1}
\end{figure}

Next, maximally compress $\partial B^c \cup \partial H^{pc}_1 \cup \partial H^{pc}_2$ in the complement of $\beta^*$ using disks $D_{k+1},...,D_r$ and note that these compressions can be taken to be disjoint from $\partial B^c$. If any component of the compressed surface is not an inessential  2-sphere, then we have produced an essential sphere $F$ transverse to $\beta^*$ such that $|F \cap \beta^*|<k$. By Theorem \ref{tangdist2}, $d(\Sigma)\leq |F \cap \beta^*|<k=\min(d(\Sigma), |\Sigma \cap \beta^*|)$, a contradiction. Hence, the result of maximally compressing $\partial B^c \cup \partial H^{pc}_1 \cup \partial H^{pc}_2$ in the complement of $\beta^*$ is a collection of inessential spheres denoted by $\partial B^c \cup \partial H^{c}_1 \cup \partial H^{c}_2$. After an isotopy, we can assume that $\partial B^c \cup \partial H^{c}_1 \cup \partial H^{c}_2$ is contained in a regular neighborhood of a single point on $\beta^*$. Moreover, since before this isotopy $\partial B^c$ was disjoint from $\Gamma$, although $\partial B^c$ may intersect $\Gamma$ during this isotopy, $\Gamma$ is contained in the same component of $S^3 \setminus  \partial B^c$ both before and after this isotopy. 

We can now undo all compressions described above in reverse order. First, we will tube $\partial B^c \cup \partial H^{c}_1 \cup \partial H^{c}_2$ along the boundaries of regular neighborhoods of arcs dual to the disks $D_{k+1},...,D_r$ to form a surface $F^c$ isotopic to $\partial B^c \cup \partial H^{pc}_1 \cup \partial H^{pc}_2$. This surface lies in the neighborhood of a 1-complex in $\mathbb{R}^3$ with a single vertex on $\beta^*$ and thus can be isotoped to be disjoint from $\Gamma$. Then we will undo the compressions that we did along $D_{1},...,D_k$ and their sub-disks as described above obtaining a surface $F^n$ that is isotopic to $\bdd N$ but which is disjoint from $\Gamma$ and such that $\Gamma$ is contained in the same component of $S^3\setminus \partial B^*$ as $N$. Now, instead of viewing the isotopy described as an isotopy of $\bdd N$ we view it as an isotopy of $\Gamma$ after which $\Gamma$ is contained in one of $N$, $\boundary H_1^*$, or $\boundary H_2^*$.

\end{proof}

\begin{lemma}
After the isotopy described in Lemma \ref{first isotopy} $\Gamma$ is contained in one of $H_1^*$ or $H_2^*$.
\end{lemma}

\begin{proof}

 By Lemma \ref{first isotopy}, it suffices to show that $\Gamma$ is not contained in $N$. We will proceed by contradiction and thus assume $\Gamma \subset N$.

Let $\Sigma_{\Gamma}$ be the boundary of a closed regular neighborhood of $\Gamma$ in $N$ and note that $\Sigma_{\Gamma}$ is transversely isotopic to $\Sigma$ relative to $\beta^*$ and it is therefore compressible on the side containing $\Gamma$. We will first construct a new manifold $N_c$ by adding 2-handles to $N$ so that $\Sigma_{\Gamma}$ is bicompressible in $N_c$. If $\Sigma_{\Gamma}$ is already bicompressible in $N$ let $N_c=N$. Otherwise, let $E$ be a compressing disk for $\Sigma_{\Gamma}$ in $(S^3,\beta^*)$ to the side disjoint from $\Gamma$. Assume that we have isotoped $E$ so that $|E\cap \partial N|$ is minimal. In particular, all curves of $E\cap \partial N$ are essential in $\partial N$. Let $E_N$ be the planar surface embedded in $N$ that is the closure of the complementary component of $E\cap \partial N$ in $E$ that contains $\partial E$. Hence, $\partial E_N$ is the union of $\partial E$ and $\alpha_1,..., \alpha_n$, a collection of disjoint essential curves in $\partial N$. Create the new 3-manifold $N_c$ by attaching 2-handles $B_1, ..., B_k$ to $N$ along each distinct isotopy class of $\alpha_1,..., \alpha_n$. By construction, $\Sigma_{\Gamma}$ is bicompressible in $(N_c, \beta^* \cap N_c)$. In what is to follow we will always allow the collection of 2-handles $B_1, ..., B_k$ to be empty.

\textbf{Claim:} The distance of $\Sigma_{\Gamma}$ as a bicompressible surface in $(N_c, \beta^* \cap N_c)$ is greater than or equal to the distance of $\Sigma_{\Gamma}$ as a bicompressible surface in $(S^3,\beta^*)$.

\begin{proof}[Proof of Claim]

Let $\omega$ be an essential curve in $\Sigma_{\Gamma}$ that bounds a compressing disk $E_{\omega}$ in $N_c$ to the side of $\Sigma_{\Gamma}$ that is disjoint from $\Gamma$. After an isotopy of $E_{\omega}$, we can assume that $E_{\omega}$ meets each of $B_1, ..., B_k$ in a collection of disks each of which is parallel to $D^2\times \{\frac{1}{2}\}$ in $D^2\times I=B_i$ for some $i$. We use the following procedure to construct an immersed compressing disk for $\Sigma_{\Gamma}$ in $(S^3,\beta^*)$ with boundary $\omega$. Let $\gamma$ be a boundary component of the closure of $E_{\omega}\setminus (B_1\cup ... \cup B_k)$ other than $\omega$. By construction, $\gamma$ bounds a disk in $E_{\omega}$ and is isotopic in $\partial N$ to $\alpha_j$ for some $j$. Glue a copy of the disk $\alpha_j$ bounds in $E$ to $\gamma$ and repeat this process for each boundary component of $E_{\omega}\setminus (B_1\cup ... \cup B_k)$ other than $\omega$. The result is an immersed compressing disk for $\Sigma_{\Gamma}$ in $(S^3,\beta^*)$ with boundary $\omega$ such that the singularity set is contained in the interior of the disk. By the loop theorem, this implies that $\omega$ bounds a compressing disk for $\Sigma_{\Gamma}$ in $(S^3,\beta^*)$ to the side of $\Sigma_{\Gamma}$ that is disjoint from $\Gamma$. Since $\omega$ was arbitrary, every curve in $\Sigma_{\Gamma}$ that bounds a compressing disk for $\Sigma_{\Gamma}$ in $N_c$ to the side disjoint from $\Gamma$ also bounds a compressing disk for $\Sigma_{\Gamma}$ in $(S^3,\beta^*)$ to the side disjoint from $\Gamma$. Thus, the distance of $\Sigma_{\Gamma}$ as a bicompressible surface in $(N_c, \beta^*\cap N_c)$ is greater than or equal to the distance of $\Sigma_{\Gamma}$ as a bicompressible surface in the $(S^3,\beta^*)$.
\end{proof}

Maximally compress $\Sigma_{\Gamma}$ in $N_c$ to the side opposite of $\Gamma$ and ignore any inessential spheres to obtain a surface $\Sigma_{\Gamma}^*$.

\begin{figure}[ht]
\labellist \small\hair 2pt
\pinlabel {$N$} [b] at 90 390
\pinlabel {$M$} [b] at 280 285
\pinlabel {$\Gamma$} [b] at 303 213
\pinlabel {$\Sigma_{\Gamma}^{*}$} [b] at 240 212
\endlabellist
\includegraphics[scale=.6]{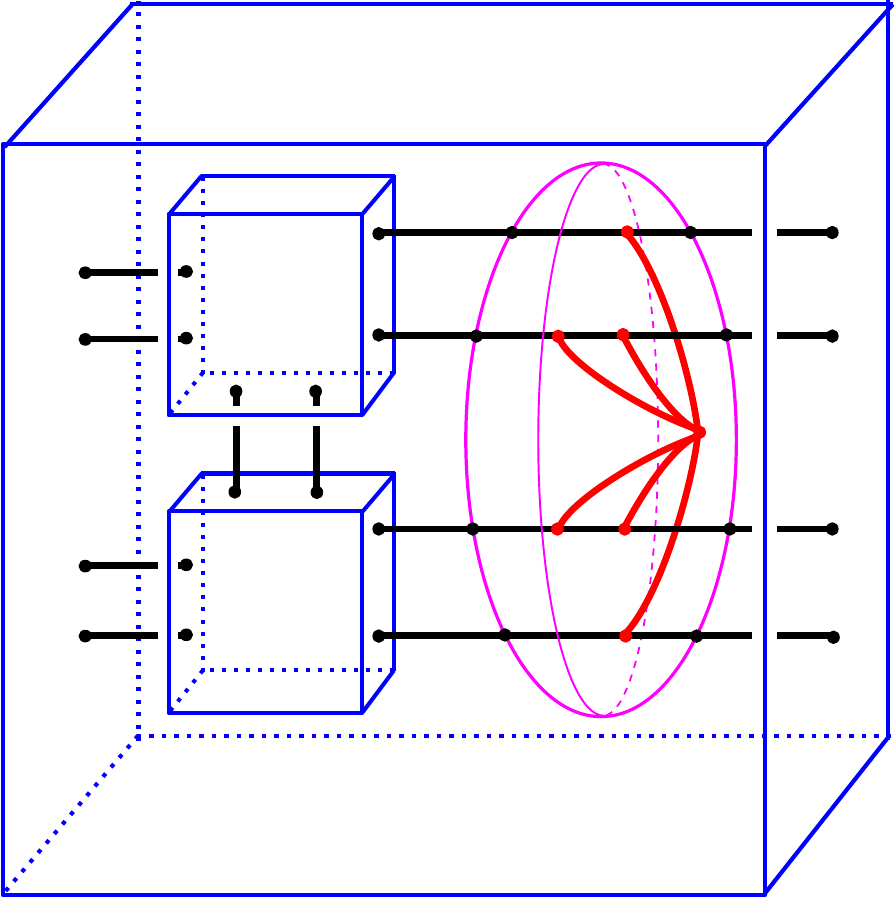}
\caption{An example of $M$, $\Sigma_{\Gamma}^*$ and $\Gamma$ in $N$.}
\label{fig:MandN}
\end{figure}

\textbf{Case 1:} $\Sigma_{\Gamma}^*$ is non-empty and is compressible to the side containing $\Gamma$.
Let $M$ denote the submanifold of $N_c$ that $\Sigma_{\Gamma}^*$ bounds and contains $\Gamma$. See Figure \ref{fig:MandN} which depicts the case when $N=N_c$.

This compressing disk will play the role of $F$ in  the statement of Theorem \ref{tangdist2} and we can conclude that the distance of $\Sigma_{\Gamma}$ as a bicompressible surface in $(M, \beta^*\cap M)$ is less than or equal to $|\bdd F|=1$. This is a contradiction since the distance of $\Sigma_{\Gamma}$ as a bicompressible surface in $(M, \beta^*\cap M)$ is greater than or equal to the distance of $\Sigma$ as a bicompressible surface in the complement of $\beta^*$ which is greater than or equal to $k$ and $k > 80\pi$.

\textbf{Case 2:} $\Sigma_{\Gamma}^*$ is non-empty and is not compressible to the side containing $\Gamma$.

Again, let $M$ denote the submanifold of $N_c$ that $\Sigma_{\Gamma}^*$ bounds and contains $\Gamma$. In this case, $\Sigma_{\Gamma}^*$ is incompressible in $(N_c,\beta^*\cap N_c)$. Let $\Sigma_N$ be the bridge sphere for the tangle $(N, \beta^*\cap N)$, which is constructed by tubing a surface isotopic to $\partial H_1^*$ to a surface isotopic to $\partial H_2^*$. By choosing a tube disjoint from $\beta^*$, we can assume $\Sigma_N$ meets $\beta^*$ in $|S\cap \beta^*|+2|D\cap \beta^*|$ points. By Theorem \ref{tangdist1}, we can compare the bridge sphere $\Sigma_N$ for $(N_c,\beta^*\cap N_c)$ and the bridge sphere $\Sigma_{\Gamma}$ for $(M,\beta^*\cap M)$ and conclude that one of the following holds:

\begin{enumerate}

\item $|\beta^*\cap \Sigma_N|\geq |\beta^*\cap \Sigma_{\Gamma}|$.
\item $d(\Sigma_{\Gamma}) \leq |\Sigma_{N} \cap \beta^*|$.
\item $|\Sigma_{\Gamma} \cap \beta^*|\leq 5$.
\end{enumerate}

If $(1)$ holds, then $|\beta^*\cap \Sigma_N|\geq |\beta^*\cap \Sigma_{\Gamma}|$ and $|S\cap \beta^*|+2|D\cap \beta^*|=|\Sigma_N\cap \beta^*|\geq k$, a contradiction, since $|S\cap \beta^*|+2|D\cap \beta^*|< k$. If $(2)$ holds, $d(\Sigma_{\Gamma}) \leq |S\cap \beta^*|+2|D\cap \beta^*|<k$, where this distance of $\Sigma_{\Gamma}$ is being measured as a bicompressible surface in $(N_c, \beta^* \cap N_c)$. However, by the previous claim, we have already established that the distance of $\Sigma_{\Gamma}$ as a bicompressible surface in $(N_c, \beta^* \cap N_c)$ is greater than or equal to the distance of $\Sigma_{\Gamma}$ as a bicompressible surface in $(S^3,\beta^*)$. This provides a contradiction, since the distance of $\Sigma_{\Gamma}$ as a bicompressible surface in $(S^3,\beta^*)$ is greater than or equal to $k$. Finally, $(3)$ does not hold since $|\Sigma_N \cap \beta^*|\geq 6$.

\textbf{Case 3:} $\Sigma_\Gamma^* $ is empty.

In this case, $\partial N_c$ is a collection of twice punctured spheres and, since $\beta^*$ is prime, $\Sigma_{\Gamma}$ and $\Sigma_N$ are  bridge spheres for $(S^3,\beta^*)$. In this case, we arrive at a contradiction in a similar manner as in Case 2, but in this case we set $M=N_c=S^3$. 

Since in all cases we reach a contradiction, $\Gamma$ is not contained in $N$.
\end{proof}

Since $\Gamma$ can be isotoped into $H_1^*$ or $H_2^*$, we arrive at a contradiction in exactly the same manner as in \cite{Pardon}: Recall that
\[
|S\cap \beta^*|+2|D\cap \beta^*|\leq 20(1+\frac{1}{\epsilon})\delta(\beta^*)<k.
\]
Thus, $|\bdd H_i^*\cap \beta^*|\leq k$. But $H_i^*$ is strictly contained in a congruent image of $Box(2^{-1/3}(1 + \epsilon) +\frac{\epsilon}{2} )$ so for $\Delta=1-2^{-1/3}(1 + \epsilon) -\frac{\epsilon}{2}$, we have $(1-\Delta)\mc{R}\subset \mc{R}$. Moreover, if $\epsilon =\frac{1}{7}$, then $0<(1-\Delta)<1$. This is impossible since a spine of a bridge sphere of a non-trivial knot cannot be isotoped to lie in an arbitrarily small region in $\mathbb{R}^3$. Hence,

$$ 160\delta(\beta^*)\geq k = \min(d(\Sigma), |\Sigma \cap \beta|).$$

\section{Comparing lower bounds}\label{sec:altJMknots}

In this section we show that the bound on distortion provided by Theorem \ref{thm:main} can be arbitrarily better than Pardon's lower bound which is a function of the representativity of a knot, a quantity first defined by Ozawa \cite{Oz}. Let $\beta$ be curve in $S^3$ embedded in a surface $F\subset S^3$ and let $S$ be the set of all isotopy classes of essential simple closed loops in $F$. Let $i:S \times S \rightarrow \mathbb{Z}_{\geq0}$ denote the minimum geometric intersection number. The \defn{representativity of $\beta$ with respect to $F$}, $I(F, \beta)$ is given by $min_{\alpha \in U} i(\alpha, \beta)$ where $U$ is the set of all nontrivial isotopy classes of loops $\alpha \in F$ which bound a PL embedded disk whose interior is disjoint from $F$. The \defn{representativity of a knot} $K$, $I(K)$ is given by $max_{F \in\mathcal{F}}I(F,K)$ where $\mathcal{F}$ is the set of all embedded surfaces containing $K$.

\begin{theorem}\label{the:Pardon}[\cite{Pardon}]
Let $F \subset \mathbb{R}^3$ be a PL embedded closed surface of genus $g \geq 1$. Let $\beta$ be an isotopy class of essential simple closed loops in $F$, and let $K_{\beta}$ denote the corresponding knot in $\mathbb{R}^3$. Then we have:

$$\delta(K_{\beta}) \geq \frac{1}{160} I(F, \beta)$$

\end{theorem}

In other words, $$\delta(K) \geq \frac{1}{160} I(K)$$
 Key to our construction is a recent result of Kindred.

\begin{theorem}\label{thm:kindred}[{\cite{Kindred} Main Theorem}]
Every non-split, non-trivial alternating link L has representativity $I(L) = 2$.
\end{theorem}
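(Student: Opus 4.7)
The plan is to prove the equality $I(L)=2$ by establishing the two inequalities $I(L)\geq 2$ and $I(L)\leq 2$ separately. For the lower bound, take $F$ to be the double $S_+\cup_L S_-$ of a minimal-genus Seifert surface $S$ for $L$. Because $L$ is non-trivial and non-split, $S$ is incompressible in $S^3\setminus L$. Observe that $L$ separates $F$, so every simple closed curve on $F$ transverse to $L$ meets $L$ in an even number of points; hence if $|\partial E\cap L|<2$ for some compressing disk $E$, then $\partial E$ is disjoint from $L$. But then $\partial E$ lies entirely in one of $S_\pm$ and is essential there, so $E$ compresses $S$ in $S^3\setminus L$, contradicting incompressibility. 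Thus $I(F,L)\geq 2$, and therefore $I(L)\geq 2$.

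For the upper bound $I(L)\leq 2$, fix a reduced, prime, connected alternating diagram $D$ of $L$ on a projection sphere $P\subset S^3$ (such a diagram exists by Menasco--Thistlethwaite, using non-splitness and non-triviality). Let $F$ be any closed PL surface in $S^3$ with $L\subset F$; the goal is to produce a compressing disk $E$ for $F$ with $|\partial E\cap L|\leq 2$. The plan is to use Menasco's bubble decomposition: inflate each crossing of $D$ into a small ball (a \emph{bubble}), so that outside the bubbles $L$ lies on $P$. Isotope $F$ into general position with respect to $P$ and the bubbles while keeping $L\subset F$, and analyze $F\cap P$ by innermost-disk and outermost-arc surgeries in $P\setminus L$. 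Either a surgery directly yields a compressing disk whose boundary meets $L$ at most twice (a disk carved off inside a single bubble meets $L$ exactly twice, while a disk cut off by a face of the diagram meets $L$ at most twice after invoking primeness), or the surgeries terminate with $F$ pushed off $P$ away from $L$, at which point a disk compressing $F$ can be extracted from the checkerboard structure of the alternating diagram.

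The main obstacle is carrying out the innermost-disk/outermost-arc reductions while preserving the condition $L\subset F$, and certifying that no stable configuration of $F\cap P$ forces every compressing disk to cross $L$ three or more times. This is exactly where the alternating hypothesis is essential: Menasco's classification of essential meridional surfaces in alternating link complements constrains how any surface can thread through the bubbles, and these constraints propagate to closed surfaces that contain $L$ on them. Primeness and non-splitness eliminate the trivial counterexamples coming from splitting spheres and connect-sum decomposition spheres, so that the combinatorial face argument really does close. Putting the two inequalities together yields $I(L)=2$, completing the proof.
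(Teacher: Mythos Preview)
This theorem is not proved in the paper; it is cited as Kindred's main result and used as a black box in Section~\ref{sec:altJMknots}. So there is no proof in the paper to compare against, and your proposal must be judged on its own merits.

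Your lower bound $I(L)\geq 2$ is correct and essentially standard: doubling an incompressible Seifert surface gives a closed surface $F\supset L$ of positive genus (since $L$ is non-trivial and non-split), $L$ separates $F$ so intersection numbers are even, and a putative compressing disk with $\partial E\cap L=\varnothing$ would compress $S$ in the link exterior. This part is fine.

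The upper bound, however, is not a proof but a programme, and the programme as stated has a genuine gap. You acknowledge that ``the main obstacle is carrying out the innermost-disk/outermost-arc reductions while preserving the condition $L\subset F$,'' and then assert that Menasco's machinery for essential surfaces in alternating link exteriors ``propagates to closed surfaces that contain $L$.'' It does not, at least not directly: Menasco's arguments concern surfaces in the \emph{complement} of $L$, and the passage to closed surfaces containing $L$ is exactly the new content. The statement $I(L)\leq 2$ for alternating links was Ozawa's conjecture and remained open for several years after Menasco's techniques were well understood; if innermost/outermost surgery in the bubble picture sufficed, the conjecture would not have persisted. Kindred's actual argument brings in the Greene--Howie characterisation of alternating links via positive- and negative-definite spanning surfaces, and analyses how an arbitrary closed $F\supset L$ intersects the two checkerboard surfaces, using definiteness to force a compressing disk meeting $L$ at most twice. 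Your sketch neither invokes this structure nor supplies a substitute, so the hard direction remains unproved.
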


It remains to exhibit a sequence of non-trivial alternating knots with the desired bridge numbers and distances.

In \cite{JM} Johnson and Moriah showed that:

\begin{theorem}\label{thm:johnsonmoriah}[{\cite{JM}, Theorem 1.2 and Corollary 1.3}] \label{thm:JM} If $K$ is a $b$-bridge link type in $S^3$ with a highly twisted $n$-row, $2b$-plat projection for $b \geq 3$ and $n\geq 4b(b-2)$ then $d(K) = \lceil \frac{n}{(2(b-2))}\rceil$.

\end{theorem}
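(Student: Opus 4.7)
The plan is to prove the upper bound and the lower bound separately, using the standard bridge sphere $\Sigma$ that a $2b$-plat projection naturally provides. This bridge sphere has $2b$ punctures and decomposes $S^3$ into two trivial tangles whose disk sets I will call $\mathcal{D}_0$ (the ``top'' side, above all twist regions) and $\mathcal{D}_1$ (the ``bottom'' side). Both upper and lower bound should reflect the fact that a packet of $2(b-2)$ consecutive twist rows contributes exactly one unit of distance in $\mathcal{C}(\Sigma)$.

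For the upper bound, I would construct an explicit sequence of curves $c_0, c_1, \ldots, c_N \in \mathcal{C}(\Sigma)$ with $c_0 \in \mathcal{D}_0$, $c_N \in \mathcal{D}_1$, $N = \lceil n/(2(b-2)) \rceil$, and consecutive curves disjoint. The idea is to start with the boundary of a bridge disk sitting above the plat, then ``push it down'' through the twist box one block of $2(b-2)$ rows at a time. Each block permits one valid disjointness move because the highly twisted structure forces the curve to wrap around a specific collection of bridges, and $2(b-2)$ rows is exactly what is needed before the curve can be nudged off its previous position while remaining simple and essential. Making this explicit requires choosing canonical curves carefully and checking disjointness by a direct combinatorial inspection of the plat diagram.

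For the lower bound, which I expect to be the main obstacle, I would use subsurface projections in the style of Masur--Minsky. To each twist row in the plat there is associated a natural four-punctured subsphere $Y \subset \Sigma$ surrounding the two pairs of strands being twisted. If the plat is ``highly twisted'' (say, every twist region has at least $6$ crossings), then the subsurface projections $\pi_Y(\mathcal{D}_0)$ and $\pi_Y(\mathcal{D}_1)$ differ by a distance in $\mathcal{C}(Y)$ that grows with the twisting parameter, and in particular exceeds the bounded geodesic image threshold. Applying the bounded geodesic image theorem, any geodesic in $\mathcal{C}(\Sigma)$ from $\mathcal{D}_0$ to $\mathcal{D}_1$ must contain a vertex disjoint from $Y$, i.e., a vertex that lies in the annular complement of the two pairs of strands. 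The lower bound then follows by grouping the $n$ twist rows into $\lceil n/(2(b-2)) \rceil$ overlapping ``blocks'' of $2(b-2)$ rows and showing that, because of how the projections to different four-holed subspheres interact, no single vertex of the geodesic can simultaneously miss two different blocks.

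The main technical obstacle is the lower bound, and within it the precise bookkeeping that links the factor $2(b-2)$ to the geometry of the plat. Concretely, one must identify the correct family of four-punctured witnesses (there is some redundancy among twist rows), verify that each witness has large projection distance when $n \geq 4b(b-2)$, and then show that a single step along a geodesic in $\mathcal{C}(\Sigma)$ cannot ``resolve'' more than one block's worth of projection data. The hypothesis $n \geq 4b(b-2)$ is exactly what guarantees enough witnesses to make the counting go through, and matching the upper-bound construction to this counting is what pins the distance to the stated value rather than a weaker estimate.
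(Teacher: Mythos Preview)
The paper does not prove this statement at all: Theorem~\ref{thm:johnsonmoriah} is quoted directly from Johnson--Moriah \cite{JM} (their Theorem~1.2 and Corollary~1.3) and is used here as a black box. There is therefore no ``paper's own proof'' to compare your proposal against; the authors simply cite the result and apply it in Theorems~\ref{unboundedrep2} and~\ref{thm:upperbound}.

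That said, your sketch is aimed in the right general direction for what Johnson and Moriah actually do. Their argument is not based on the bounded geodesic image theorem, however: the lower bound in \cite{JM} is obtained via train tracks. They associate to each row of the plat a train track on the bridge sphere, show that under the ``highly twisted'' hypothesis the disk-set curves are carried by these tracks, and then prove that curves carried by tracks whose row indices differ by at least $2(b-2)$ must intersect. This forces any path in $\mathcal{C}(\Sigma)$ from $\mathcal{D}_0$ to $\mathcal{D}_1$ to step through the rows in increments of at most $2(b-2)$, yielding the lower bound $\lceil n/(2(b-2))\rceil$. The upper bound is, as you guessed, an explicit path. Your Masur--Minsky/subsurface-projection strategy could plausibly be made to work, but the bookkeeping you identify as the main obstacle is exactly what the train-track formalism in \cite{JM} is designed to handle cleanly, and your outline as written does not yet pin down why a single geodesic vertex cannot resolve two blocks at once.
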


Figure \ref{fig:JMfig} depicts a knot of this type. Each box represents an alternating twist region with at least 3 crossings.  Note also that if the handedness of the twists alternates in alternating rows of twist regions, the resulting link is alternating.

\begin{figure}[ht]
\labellist \small\hair 2pt
\pinlabel {$o^+$} [b] at 37 27
\pinlabel {$o^+$} [b] at 86 27
\pinlabel {$o^+$} [b] at 249 27
\pinlabel {$e^+$} [b] at 37 312
\pinlabel {$e^+$} [b] at 86 312
\pinlabel {$e^+$} [b] at 249 312
\pinlabel {$e^+$} [b] at 37 200
\pinlabel {$e^+$} [b] at 86 200
\pinlabel {$e^+$} [b] at 249 200
\pinlabel {$e^-$} [b] at 13 83
\pinlabel {$e^-$} [b] at 61 83
\pinlabel {$e^-$} [b] at 111 83
\pinlabel {$e^-$} [b] at 224 83
\pinlabel {$e^-$} [b] at 273 83
\pinlabel {$e^-$} [b] at 13 256
\pinlabel {$e^-$} [b] at 61 256
\pinlabel {$e^-$} [b] at 111 256
\pinlabel {$e^-$} [b] at 224 256
\pinlabel {$e^-$} [b] at 273 256
\endlabellist
\includegraphics[scale=.6]{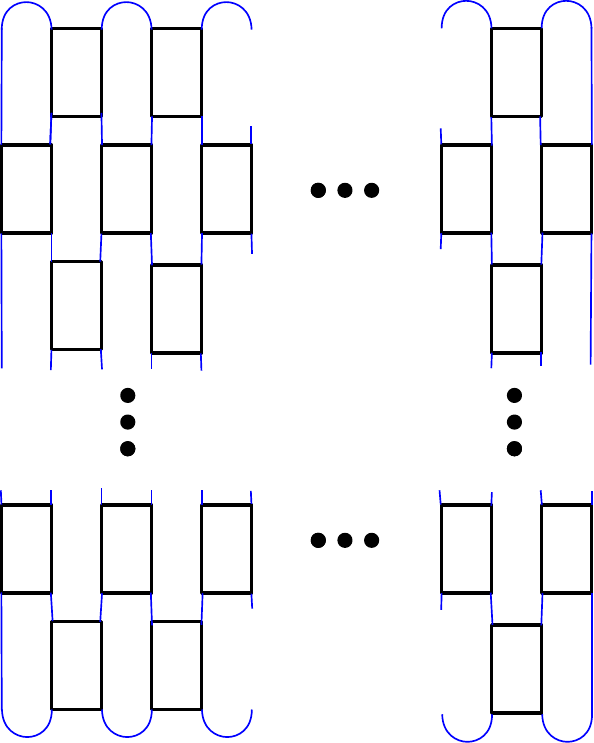}
\caption{An example of an alternating Johnson-Moriah knot. Each box represents an alternating twist region in the diagram. The label of $o$ indicates an odd number of crossings, the label of $e$ represents an even number of crossings and the super scripts of $+$ and $-$ represent right-handed and left-handed twisting respectively. Each region must contain at least 3 crossings.}
\label{fig:JMfig}
\end{figure}

\begin{theorem}\label{unboundedrep2}
There exists a sequence of knot types $\{K_i\}$ such that $lim_{i\rightarrow \infty} b(K_i)=lim_{i\rightarrow \infty} d(K_i)=\infty$ yet $I(K_i)= 2$ for all $i$.
\end{theorem}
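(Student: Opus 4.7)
The plan is to combine the Johnson--Moriah construction (Theorem \ref{thm:JM}) with Kindred's theorem (Theorem \ref{thm:kindred}). Specifically, for each integer $b \geq 3$ I would fix a $2b$-plat diagram with $n$ rows of alternating twist regions, choosing the sign/parity pattern of the twists as in Figure \ref{fig:JMfig} so that the resulting diagram is globally alternating. I would then take $n = n(b)$ to be an integer growing with $b$ (for instance $n(b) = 4b(b-2) + b$), with each twist box containing at least three crossings so that the hypotheses of Theorem \ref{thm:JM} are satisfied.

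Let $K_b$ denote the resulting link type. By construction the diagram is alternating, and by Theorem \ref{thm:JM} we have $b(K_b) = b$ and $d(K_b) = \lceil n(b)/(2(b-2)) \rceil$. Both of these quantities tend to infinity with $b$, so setting $K_i := K_i$ for $i \geq 3$ realizes the first half of the claim. Since $b(K_b) \geq 3$, each $K_b$ is non-trivial, and non-splitness follows from the fact that a split link would admit an essential sphere disjoint from $K_b$, contradicting the positive bridge distance guaranteed by Theorem \ref{thm:JM} (alternatively, non-splitness of non-trivial alternating links is classical via Menasco's theorem on alternating diagrams).

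Finally, since each $K_i$ is a non-trivial, non-split, alternating link, Kindred's Theorem \ref{thm:kindred} yields $I(K_i) = 2$ for every $i \geq 3$. This completes all the required conclusions.

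The main issue to verify carefully is that the Johnson--Moriah plats admit an alternating diagram under the sign pattern indicated in Figure \ref{fig:JMfig}; this is a direct inspection of the plat (checking that neighboring crossings in each row and between adjacent rows have the correct over/under alternation) and is asserted in the discussion preceding the theorem statement. No deep additional work is needed once the diagram is seen to be alternating, because the heavy lifting is done by Theorems \ref{thm:JM} and \ref{thm:kindred}.
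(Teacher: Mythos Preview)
Your approach is essentially identical to the paper's: Johnson--Moriah plats made alternating, then Kindred. One genuine omission, though: the theorem asks for \emph{knot} types, and a generic choice of twist parameters in a $2b$-plat can produce a multi-component link. The paper handles this by stipulating that the bottom row of twist regions has an odd number of crossings and all other regions have an even number, which forces a single component; you should add such a parity condition (and check it is compatible with the alternating sign pattern).

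A second, smaller point: you invoke Theorem~\ref{thm:JM} for $b(K_b)=b$, but as stated there the phrase ``$b$-bridge link type'' reads as a hypothesis, not a conclusion. The paper instead deduces minimality of the obvious bridge sphere from Theorem~\ref{tangdist1} (large distance forces any other bridge sphere to have at least as many punctures). Either route works---Johnson--Moriah do prove the bridge number in their Corollary~1.3---but you should be explicit about which statement you are using.
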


\begin{proof}
Let $\{K_i\}_{i\geq 3}$ be a sequence of $i$-bridge knot types as in Figure \ref{fig:JMfig} where $K_i$ has a plat projection with $4i(i-2)$-rows and at least three half-twists in each twist region. As above, we can specify the handedness of the twisting to guarantee the links are alternating. Furthermore, to ensure that each $K_i$ is actually a knot, we additionally require that the last row of alternating twist regions contains an odd number of crossings and all other twist regions contain an even number of crossings. By Theorem \ref{thm:johnsonmoriah}, $d(K_i)=2i$. By Theorem \ref{tangdist1}, these knots have bridge number $i$ as required. Finally, by Theorem \ref{thm:kindred}, each of these knots has representativity equal to two.

\end{proof}

\section{An upper bound}
In this section, we give an upper bound for the distortion of the family of knots constructed by Johnson and Moriah.

\begin{theorem} \label{thm:upperbound}
Suppose $K$ has an $n$-row, $2b$-plat projection where each twist region has at least 3 half-twists and so that $b \geq 3$ and $n\geq 4b(b-2)$. Then $$\delta(K)\leq \mathcal{C}b^2d$$ where $d$ is the distance of the knot $K$ and $\mathcal{C}$ is a constant that only depends on the number of half-twists in each twist region.
\end{theorem}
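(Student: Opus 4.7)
The plan is to exhibit an explicit PL embedding $\gamma_0$ of the knot type $K$ in $\R^3$ that realizes the given $n$-row, $2b$-plat projection in a controlled geometric way, then estimate $\delta(\gamma_0)$ directly. First I would place the $2b$ vertical strands of the plat on parallel lines at unit spacing, and realize each twist region inside a cube of unit side length by drawing the two twisting strands as a finite segment of a double helix of unit radius executing the prescribed number of half-twists. Let $T$ denote the maximal number of half-twists over all boxes in the diagram. Under this model, the arc of $\gamma_0$ inside any single twist region has length bounded by some $\mathcal{C}_0 = \mathcal{C}_0(T)$, and distinct strands inside a cube remain at Euclidean distance at least some universal $\rho_0 > 0$ depending only on $T$. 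Closing the plat by $O(b)$ top and bottom caps, each of length $O(1)$, contributes a total of $O(b)$ to the arclength. Since the plat contains $O(bn)$ twist regions and Theorem~\ref{thm:johnsonmoriah} gives $n \leq 2(b-2)d$, I obtain
\[
L(\gamma_0) \;\leq\; \mathcal{C}_0 \cdot O(bn) + O(b) \;=\; O(\mathcal{C}_0\, b^2 d).
\]

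Next I would bound $\delta(\gamma_0) = \sup_{p,q} d_{\gamma_0}(p,q)/d(p,q)$ by a case analysis on $r = d(p,q)$. Because the helical model has bounded angular speed, I can arrange that the arclength parametrization of $\gamma_0$ is $K_0$-bi-Lipschitz on every sub-arc of $\gamma_0$ of length at most $\rho_0$, for some constant $K_0 = K_0(T)$, and that any two points of $\gamma_0$ lying on distinct local strands of a twist region or different twist regions are at Euclidean distance at least $\rho_0$. If $r \geq \rho_0$, then the crude bound $d_{\gamma_0}(p,q) \leq L(\gamma_0)/2$ gives
\[
\frac{d_{\gamma_0}(p,q)}{r} \;\leq\; \frac{L(\gamma_0)}{2\rho_0} \;=\; O(\mathcal{C}_0\, b^2 d).
\]
If instead $r < \rho_0$, the separation property forces $p$ and $q$ to lie on a common local arc of $\gamma_0$ of diameter at most $\rho_0$, and the bi-Lipschitz bound gives $d_{\gamma_0}(p,q) \leq K_0 r$, so the ratio is at most the constant $K_0$. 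In either case the ratio is $O(\mathcal{C}_0\, b^2 d)$, which is the desired inequality with $\mathcal{C} = \mathcal{C}(T)$.

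The main obstacle is the geometric construction of a single twist region in which the two twisting strands remain uniformly separated from each other while each individual strand retains a uniform bi-Lipschitz constant depending only on $T$. A naive piecewise-linear zig-zag model of the twist region would allow the two strands to pass arbitrarily close to one another, ruining the small-$r$ case; the double helix (or any braided model with bounded angular speed and radius bounded below) resolves this by keeping each strand a uniform distance from the central axis of the cube and hence from the opposing strand. Once this local model is fixed, the remaining steps are routine: strands coming from different twist regions are automatically separated by the unit-grid placement, caps contribute only $O(b)$ which is absorbed into the main length estimate, and all constants depend only on $T$ as required. In effect the argument reduces the global distortion bound to a single local geometric construction of fixed combinatorial type, after which all estimates scale correctly with the knot parameters $b$ and $d$.
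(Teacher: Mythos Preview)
Your proposal is correct and follows essentially the same route as the paper's proof. Both arguments build an explicit helical model of the plat (twist boxes realized as double helices inside unit-size cylinders or cubes), use Theorem~\ref{thm:johnsonmoriah} to convert the row count $n$ into the bridge distance $d$ and thereby bound the total arclength by $O(b^2d)$ times a constant depending only on the maximal twist number, and then split the ratio $d_\gamma(p,q)/d(p,q)$ into a ``near'' case controlled by local geometry and a ``far'' case controlled by the global length divided by a uniform separation constant. The only cosmetic difference is that the paper breaks the near case into ``same arc'' and ``adjacent arcs'' with an explicit helix computation and a reflection trick, whereas you absorb both into a single local bi-Lipschitz statement; this is a matter of packaging rather than a genuinely different idea.
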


\begin{proof}
We wish to construct an embedding, $\gamma$, of $K$ with a projection of the form of Figure \ref{fig:JMfig}, where each twist box is the projection of a cylinder in $\mathbb{R}^3$ of height $1$ and diameter $1$ and every cylinder touches each of the cylinders diagonally above and below it in a single point. Note that the number of rows, $n$, is always odd in this construction. Let $t_i$ be the number of half-twists in cylinder $i$ and assume each strand of the knot in a cylinder is a helix lying in the surface. We will think of $\gamma$ as decomposed into the following types of arcs: $2b$ bridge arcs, each of length $\pi/2$; $n+1$ vertical arcs (on the left and right edges of the diagram), each of length $1$; and $bn-\frac{n+1}{2}$ pairs of twisting arcs, where each arc in the pair has length $l_i$. The lengths $l_i$ can be easily computed in terms of the number of twists $t_i$ in the cylinder where the arc is located but that is not necessary for our purposes. Let $t=max\{t_i\}$ and let $l=max\{l_i\}$.

Consider a pair of points $p,q$ on $\gamma$.

\textbf{Claim 1:} If $p,q$ lie on the same arc, then $\frac{d_{\gamma}(p,q)}{d(p,q)} \leq 2\pi t$.

\textbf{Case A:} The arc that $p$ and $q$ lie on is a vertical arc.

In this case $\frac{d_{\gamma}(p,q)}{d(p,q)}=1$. If $p,q$ lie on a bridge arc, then $\frac{d_{\gamma}(p,q)}{d(p,q)}\leq \pi/2$.

\textbf{Case B:} The arc that $p$ and $q$ lie on is a twisting arc.
In this case $p$ and $q$ are points on a helix parametrized by $\displaystyle (r \cos \frac{x}{2}, r \sin \frac{x}{2}, \frac{x}{2 \pi t_i})$ where $0 \leq x \leq 2\pi t_i$ and $r=1/2$. Note that in this case $t_i$ is an integer but the proof holds for any real number for $t_i$. Let $\sigma =\frac{1}{2 \pi t_i}$. Then $p= (r \cos \frac{x_p}{2}, r \sin \frac{x_p}{2}, \sigma x_p)$ for some $x_p$ and $q= (r \cos \frac{x_q}{2}, r \sin \frac{x_q}{2}, \sigma x_q)$ for some $x_q$. Without loss of generality, $x_q > x_p$. Then the arc length from $p$ to $q$ is $(x_q-x_p)\sqrt{\frac{r^2}{4}+ \sigma^2}$. It follows that

\begin{align*}
\frac{d_{\gamma}(p,q)}{d(p,q)}=&\frac{(x_q-x_p)\sqrt{\frac{r^2}{4}+ \sigma^2}}{\sqrt{(r\cos \frac{x_q}{2}-r\cos \frac{x_p}{2})^2+(r\sin \frac{x_q}{2}-r\sin \frac{x_p}{2})^2+\sigma^2(x_q-x_p)^2}}\\
\leq &\frac{(x_q-x_p)\sqrt{\frac{r^2}{4}+ \sigma^2}}{\sqrt{\sigma^2(x_q-x_p)^2}} \\
=& \sqrt {\frac{\frac{r^2}{4}+ \sigma^2}{\sigma^2}} =\sqrt{\frac{1}{4}\pi^2t_i^2+1} \leq \pi t
\end{align*}
\qed

\textbf{Claim 2:} If $p,q$ lie on adjacent arcs then $\frac{d_{\gamma}(p,q)}{d(p,q)} \leq 2\pi t$.

If $p$ is in a twist region and $q$ is on an adjacent vertical or bridge arc, then let $u$ be the point in common of the two arcs. Note that $d(p,q) \geq max \{d(p,u), d(q,u)\}$. Assume that $d_{\gamma}(p,u) \geq d_{\gamma}(u,q)$. The other case is analogous. Then $$\frac{d_{\gamma}(p,q)}{d(p,q)}= \frac{d_{\gamma}(p,u)+d_{\gamma}(u,q)}{d(p,q)} \leq\frac{2d_{\gamma}(p,u)}{d(p,u)} \leq 2\pi t.$$ The last inequality follows by Claim 1.

Suppose $p$ and $q$ are on adjacent twisting arcs with twisting numbers $t_p$ and $t_q$ and let $u$ be the point in common of these arcs. Place a coordinate system with a center at $u$ so that the twisting arc containing $p$ has negative $x$ and $y$ coordinates and the arc containing $q$ has positive $x$ and $y$ coordinates. Consider the transformation which fixes the arc containing $p$ and sends each point $(x, y, z)$ of the arc containing $q$ to $(-x, y, z)$. Thus after the transformation the cylinder containing the point $q$ is stacked on top of the cylinder containing the point $p$. Let $q'$ be the image of $q$ under this transformation. The points $p$ and $q'$ are now on the same arc which is composed of two helical pieces. Note that $d_\gamma(p,q)=d_\gamma(p, q')$ and $d(p,q)\geq d(p, q')$. 

Consider a helical path $\gamma'$ from $p$ to $q$ of length max$\{ d_\gamma(p, u) , d_\gamma(u, q') \} \leq l(\gamma')\leq d_\gamma(p,q')$. We can obtain this path by unfolding the two cylinders to rectangles so that the diagonal of each rectangle has the same length as the corresponding twisting arc. See Figure \ref{fig:rectangle}. We can then use basic properties of triangles.

\begin{figure}[ht]
\labellist \small\hair 2pt
\pinlabel {$t_p$} [b] at 117 587
\pinlabel {$t_q$} [b] at 287 402
\pinlabel {$p$} [b] at 137 542
\pinlabel {$q$} [b] at 282 472
\pinlabel {$u$} [b] at 217 514
\pinlabel {$1$} [b] at 7 534
\pinlabel {$1$} [b] at 362 465
\pinlabel {$\gamma'$} [b] at 187 474
\endlabellist
\includegraphics[scale=.6]{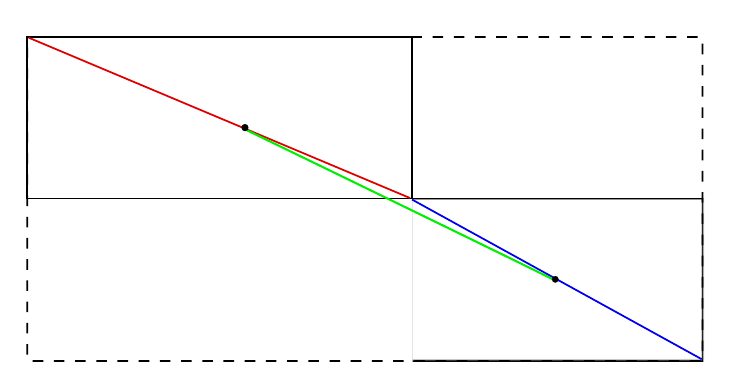}
\caption{}
\label{fig:rectangle}
\end{figure}

By Claim 1, Case B, it follows that:

$$\frac{d_{\gamma}(p,q)}{d(p,q)}\leq \frac{d_{\gamma}(p,q')}{d(p,q')} \leq \frac{2d_{\gamma'}(p,q')}{d(p,q')} \leq 2\pi t.$$

\qed

Suppose then that $p$ and $q$ are not on the same or adjacent arcs. Then there is a positive constant $\alpha$ which is independent of $b$ and $n$ so that $d(p,q) \geq \alpha$. This constant can be easily computed and it equals the minimum of the Euclidean distance between two arcs in the same twist region, but this is not needed for our proof.

Next we compute the maximum of $d_{\gamma}(p,q)$ over all pairs of points in the knot, i.e. half the length of the knot. As noted earlier, there are a total of $\displaystyle bn-\frac{n+1}{2}$ cylinders. Each cylinder contains $2$ strands with length at most $l$. There are also $2b$ bridge arcs each of length $\pi/2$ and a total of $n+1$ vertical segments each of length $1$. Thus $$d_{\gamma}(p,q)\leq \frac{(2bn-n-1)(l)+b\pi+n+1}{2}\leq bn(l+1).$$

From Theorem  \ref{thm:JM} we can conclude that $n \leq 2d(b-2)$ where $d=d(K)$. So $d_{\gamma}(p,q)\leq 2db(b-2)(l+1)\leq2db^2(l+1)\leq 4db^{2}l$.

We conclude that
\[
\delta(\gamma)=\sup\limits_{p,q\in \gamma} \frac{d_{\gamma}(p,q)}{d(p,q)}\leq\frac{4b^2dl}{\alpha}=\mathcal{C}b^2d\]
 where the  constant $\mathcal{C}$ only depends on the maximal number of half-twists over all twist regions and is independent of $d(K)$ and $b(K)$.
\end{proof}
%

\begin{corollary}\label{b3cor}
If $K_b$ is a knot in the Johnson-Moriah family with a $2b$ plat projection with $n = 4b(b-2)+1$ rows, then $\kappa_0 b \leq \delta(K_b) \leq \kappa_t b^3$
where $\kappa_0$ is a positive constant and $\kappa_t$ depends only on the maximum number, $t$, of twists in each twist box.

\end{corollary}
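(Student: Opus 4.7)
The plan is to combine the upper bound of Theorem \ref{thm:upperbound} with the lower bound of Theorem \ref{thm:main}, applied to the specific Johnson--Moriah knot $K_b$ whose plat has $n = 4b(b-2)+1$ rows and a uniformly bounded number of half-twists per twist region (say exactly three, so that the constant $\mathcal{C}$ of Theorem \ref{thm:upperbound} is an absolute constant, independent of $b$).

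First I would pin down the bridge distance of $K_b$. Since $n = 4b(b-2)+1 \geq 4b(b-2)$ and $b\geq 3$, Theorem \ref{thm:JM} applies and gives
\[
d(K_b) \;=\; \Bigl\lceil \tfrac{n}{2(b-2)} \Bigr\rceil \;=\; \Bigl\lceil 2b + \tfrac{1}{2(b-2)} \Bigr\rceil \;=\; 2b+1,
\]
because $0 < 1/(2(b-2)) \leq 1/2$ for $b\geq 3$. Also by construction $b(K_b)=b$. These two facts are all that is needed to feed into the two main inequalities.

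For the lower bound, I plug into Theorem \ref{thm:main}:
\[
\delta(K_b) \;\geq\; \tfrac{1}{160}\min\bigl(d(K_b),\, 2b(K_b)\bigr) \;=\; \tfrac{1}{160}\min(2b+1,\,2b) \;=\; \tfrac{b}{80},
\]
so one can take $\kappa_0 = 1/80$. For the upper bound, I apply Theorem \ref{thm:upperbound} to the specific embedding $\gamma$ coming from the Johnson--Moriah diagram with exactly three half-twists per twist region; the hypotheses $b\geq 3$ and $n\geq 4b(b-2)$ are satisfied, so
\[
\delta(K_b) \;\leq\; \delta(\gamma) \;\leq\; \mathcal{C}\, b^{2}\, d(K_b) \;=\; \mathcal{C}\, b^{2}(2b+1) \;\leq\; 3\mathcal{C}\, b^{3},
\]
and one sets $\kappa_1 = 3\mathcal{C}$.

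The only subtle point, and the one I would be careful about, is ensuring that $\mathcal{C}$ really is independent of $b$: this is where the choice to fix the number of half-twists per region (independently of $b$) matters, because $\mathcal{C}$ in Theorem \ref{thm:upperbound} depends on the maximal twist number $t$ through the length parameter $l$ of an individual helical strand. Once that uniformity is enforced by the choice of the family, both bounds follow immediately, and the two inequalities together yield the stated sandwich $\kappa_0 b \leq \delta(K_b) \leq \kappa_1 b^{3}$.
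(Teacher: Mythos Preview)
Your proof is correct and follows exactly the same route as the paper: compute $d(K_b)=2b+1$ from Theorem~\ref{thm:JM} and then feed this into Theorems~\ref{thm:main} and~\ref{thm:upperbound}. Your treatment is in fact more careful than the paper's, which does not explicitly address the dependence of $\mathcal{C}$ on the twist numbers; your remark that the number of half-twists per region must be bounded independently of $b$ for $\kappa_1$ to be an absolute constant is a point the paper leaves implicit.
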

\begin{proof} Observe that by Theorem \ref{thm:johnsonmoriah} if $n = 4b(b-2)+1$ then $d(K_b) = 2b+1$. The desired bound then follows from Theorem \ref{thm:upperbound} and Theorem \ref{thm:main}.

\end{proof}

\begin{corollary}
If $K_b$ is an alternating knot in the Johnson-Moriah family with a $2b$ plat projection with $n = 4b(b-2)+1$ rows and three or four half-twists in every twist region, then $\kappa_0 (c(K_b))^{\frac{1}{3}} \leq \delta(K_b) \leq \kappa_4 c(K_b)$ where $\kappa_0$ and $\kappa_4$ are positive constants and $c(K_b)$ is the crossing number of $K_b$.

\end{corollary}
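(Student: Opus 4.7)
The plan is to translate the bounds from Corollary~\ref{b3cor} using the asymptotic relation $c(K_b) = \Theta(b^3)$. Corollary~\ref{b3cor} already supplies $\kappa_0 b \leq \delta(K_b) \leq \kappa_1 b^3$, so once I know that $c(K_b)$ is comparable to $b^3$ (equivalently, that $b$ is comparable to $c(K_b)^{1/3}$), the desired inequalities $\kappa_0' c(K_b)^{1/3} \leq \delta(K_b) \leq \kappa_1' c(K_b)$ follow immediately after absorbing the comparison constants.

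To establish $c(K_b) = \Theta(b^3)$, I first count the crossings in the given diagram. Inspection of Figure~\ref{fig:JMfig} shows that the $n$ rows of twist regions alternate between rows containing $b$ boxes and rows containing $b-1$ boxes; for $n$ odd, the pattern starts and ends with a row of $b$ boxes, giving $\tfrac{1}{2}\bigl((n+1)b + (n-1)(b-1)\bigr) = \tfrac{1}{2}\bigl(n(2b-1)+1\bigr)$ twist regions in total. Since each region contains exactly three crossings by hypothesis, the diagram has $\tfrac{3}{2}\bigl(n(2b-1)+1\bigr)$ crossings. Substituting $n = 4b(b-2)+1$ produces a cubic polynomial in $b$ with leading term $12b^3$, so the diagram has $\Theta(b^3)$ crossings.

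To conclude that this count equals the crossing number, I invoke the Kauffman--Murasugi--Thistlethwaite theorem: any reduced alternating diagram of a non-split alternating link realizes the crossing number. The diagram in question is alternating by construction, and reduced because every twist region contains at least three crossings, which forbids any nugatory crossing (a nugatory crossing would need to be isolated from the rest of the diagram by a separating simple closed curve meeting the projection once, which cannot happen when the twist region housing it has $\geq 3$ crossings). Hence $c(K_b) = \tfrac{3}{2}\bigl(n(2b-1)+1\bigr) = \Theta(b^3)$. Combining this with Corollary~\ref{b3cor} and adjusting constants gives $\kappa_0 c(K_b)^{1/3} \leq \delta(K_b) \leq \kappa_1 c(K_b)$, as claimed. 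The only point that requires even a moment's thought is the reducedness check, but this is purely combinatorial given the $\geq 3$ half-twists hypothesis.
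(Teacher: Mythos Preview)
Your proof is correct and follows essentially the same route as the paper: count the twist regions, multiply by three, invoke the Kauffman--Murasugi--Thistlethwaite theorem to identify the result with $c(K_b) = \Theta(b^3)$, and then feed this into Corollary~\ref{b3cor}. One minor discrepancy: in the paper's plat convention the first and last rows carry $b-1$ boxes rather than $b$, so the twist-region count is $bn - \tfrac{n+1}{2}$ rather than your $\tfrac{1}{2}\bigl(n(2b-1)+1\bigr)$, but the two differ by $1$ and the $\Theta(b^3)$ conclusion is unaffected.
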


\begin{proof}
As in the proof of Theorem \ref{unboundedrep2}, we choose $K_b$ so that the diagram corresponding to Figure \ref{fig:JMfig} is reduced and alternating by specifying the handedness of the twisting in each twist region. Since there are at most four crossings in every twist region and every reduced, alternating diagram achieves crossing number \cite{K87, M87, Th87}, then the crossing number of $K_b$ is at most four times the number of twist regions. Since there are $bn-\frac{n+1}{2}=4b^3-10b^2+9b-1$ twist regions, then $$3(4b^3-10b^2+5b-1)\leq c(K_b) \leq 4(4b^3-10b^2+5b-1)$$.  Since $b^3\leq 4b^3-10b^2+5b-1\leq 4b^3$ for $b\geq 3$, it follows that $$3b^3\leq c(K_b) \leq 16b^3$$ for all knots under consideration. By Corollary \ref{b3cor}, as $t = 4$ there exist positive constants $\kappa_0$ and $\kappa_4$ such that $\kappa_0 (c(K_b))^{\frac{1}{3}} \leq \delta(K_b) \leq \kappa_4 c(K_b)$.
\end{proof}

\begin{bibdiv}
\begin{biblist}
\bibselect{ExceptionalSurgeries}
\end{biblist}
\end{bibdiv}


\end{document}